\DeclareMathOperator\erf{erf}
\theoremstyle{definition}
\theoremstyle{theorem}
\newtheorem{lemma}{Lemma}[section]
\theoremstyle{theorem}
\newtheorem{theorem}{Theorem}[section]
\theoremstyle{corollary}
\theoremstyle{remark}
\title{{\sc {\Large Determination of one unknown thermal coefficient
through a mushy zone model with a convective overspecified boundary condition}}}
\author{Ceretani, Andrea N. and Tarzia, Domingo A.\\{\small CONICET - Depto. Matem\'atica, Facultad de Ciencias Empresariales, Universidad Austral,\\
Paraguay 1950, S2000FZF Rosario, Argentina.\\ E-mail: \textcolor{blue}{aceretani@austral.edu.ar}; \textcolor{blue}{dtarzia@austral.edu.ar}}}
\date{}
\begin{document}
\maketitle

\begin{abstract}
A semi-infinite material under a solidification process with the Solomon-Wilson- Alexiades's mushy
zone model with a heat flux condition at the fixed boundary is considered. The associated
free boundary problem is overspecified through a convective boundary condition with the
aim of the simultaneous determination of the temperature, the two free boundaries of the
mushy zone and one thermal coefficient among the latent heat by unit mass, the thermal
conductivity, the mass density, the specific heat and the two coefficients that characterize
the mushy zone. Bulk temperature and coefficients which characterize the heat flux and
the heat transfer at the boundary are assumed to be determined experimentally.
Explicit formulae for the unknowns are given for the resulting six phase-change problems, beside necessary and sufficient conditions
on data in order to obtain them. In addition, relationship between the phase-change
process solved in this paper with an analogous process overspecified by a
temperature boundary condition is presented, and this second problem is solved
by considering a large heat transfer coefficient at the boundary in the problem with the convective boundary condition.
Formulae for the unknown thermal coefficients corresponding
to both problems are summarized in two tables.
\end{abstract}

{\bf Keywords}: Phase Change, Convective Condition,
Lam\'e-Clapeyron-Stefan Problem, Mushy Zone,
Solomon-Wilson-Alexiades Model, Unknown Thermal Coefficients.\\

{\bf 2010 AMS subjet classification}: 35R35 - 35C06 - 80A22

\section{Introduction}\label{sec:Intro}
Heat transfer problems with a phase-change such as melting and freezing have been studied in the last century due to their wide scientific and technological applications. Some books in the subject are \cite{AlSo1993, Ca1984, CaJa1959, Cr1984, Fa2005, Gu2003, Hi1987, Lu1991, Ta1986}.

In this paper we consider a phase-change process for a semi-infinite material, which is characterized by $x>0$, that is initially assumed to be liquid at its melting temperature (which without loss of generality we assume equal to $0^\circ$C). We consider this material under a solidification process with the presence of a zone where solid and liquid coexist, known as "mushy zone", with a heat flux boundary condition imposed at the fixed face $x=0$. We follow \cite{SoWiAl1982, Ta1987} in considering three different regions in this type of solidification process:
\begin{enumerate}
\item liquid region at temperature $T(x,t)=0$:
$D_l=\left\{(x,t)\in\mathbb{R}^2/\,x>r(t),\,t>0\right\}$,
\item solid region at temperature $T(x,t)<0$:
$D_s=\left\{(x,t)\in\mathbb{R}^2/\,0<x<s(t),\,t>0\right\}$,
\item mushy region at temperature $T(x,t)=0$:
$D_p=\left\{(x,t)\in\mathbb{R}^2/\,s(t)<x<r(t),\,t>0\right\}$,
\end{enumerate}
being $x=s(t)$ and $x=r(t)$ the functions that characterize the free boundaries of the mushy zone. We also follow \cite{SoWiAl1982} in making the following assumptions on the structure of the mushy zone, which is considered as isothermal:
\begin{enumerate}
\item the material contains a fixed portion of the total latent heat per unit mass (see condition (\ref{3}) below),
\item its width is inversely proportional to the gradient of temperature (see condition (\ref{4}) below).
\end{enumerate}

Parameters involved in this problem are:\\

\noindent \hspace*{2cm}$l>0$: latent heat by unit mass,\\
\hspace*{2cm}$k>0$: thermal conductivity,\\
\hspace*{2cm}$\rho>0$: mass density,\\
\hspace*{2cm}$c>0$: specific heat,\\
\hspace*{2cm}$0<\epsilon<1$: one of the two coefficients which characterize the mushy zone,\\
\hspace*{2cm}$\gamma>0$: one of the two coefficients which characterize the mushy zone,\\
\hspace*{2cm}$q_0>0$: coefficient that characterizes the heat flux at $x=0$,\\
\hspace*{2cm}$h_0>0$: coefficient that characterizes the heat transfer at $x=0$,\\
\hspace*{2cm}$-D_\infty<0$: bulk temperature at $x=0$.\\

We suppose that five of the six thermal coefficients $l$, $k$, $\rho$, $c$, $\epsilon$ and $\gamma$ of the solid phase are known and that, by means of a change of phase experiment (solidification of the material at its melting temperature) we are able to measure the quantities
$q_0$, $h_0$ and $-D_\infty$.

Encouraged by the recent works \cite{TaInPress,TaToAppear} and with the aim of the simultaneous determination of the temperature $T=T(x,t)$, the two free boundaries $x=r(t)$ and $x=s(t)$, and one unknown thermal coefficient among $l$, $k$, $\rho$, $c$, $\epsilon$ and $\gamma$, we impose an overspecified boundary condition \cite{Ca1984} which consists of the specification of a convective condition at the fixed face $x=0$ (see condition (\ref{7}) below) of the material undergoing the phase-change process. This lead us to the following free boundary problem:
\begin{align}
\label{1}&\rho c T_t(x,t)-kT_{xx}(x,t)=0&0<x<s(t),\hspace{0.25cm}&t>0\\
\label{2}&T(s(t),t)=0& &t>0\\
\label{3}&kT_x(s(t),t)=\rho l[\epsilon\dot{s}(t)+(1-\epsilon)\dot{r}(t)]& &t>0\\
\label{4}&T_x(s(t),t)(r(t)-s(t))=\gamma& &t>0\\
\label{5}&r(0)=s(0)=0\\
\label{6}&kT_x(0,t)=\frac{q_0}{\sqrt{t}}& &t>0\\
\label{7}&kT_x(0,t)=\frac{h_0}{\sqrt{t}}(T(0,t)+D_\infty)& &t>0
\end{align}

This problem was first study in \cite{Ta1987} with a temperature boundary condition at $x=0$ instead of the convective condition (\ref{7}) considered in this paper. Moreover, the determination of one unknown thermal
coefficient for the one-phase Lam\'e-Clapeyron-Stefan problem with an overspecified heat flux condition at the fixed face $x=0$ without a mushy zone was done in \cite{Ta1982}. Other papers related to determination of thermal coefficients are \cite{BaZaFr2010,BoMh2012,DaMiUp2009,ErLeHa2013,Go2013,HaPe2013,
HaIsLeKe2013,HeNoSlWiZi2013,Hr2012,HuLe2014,HuLeIv2014,InOnNaKu2007,
KaIs2012,KeIs2012,LaZh1995,LiYaWaJi2011,LuHaJiZhWuLi2015,
PeZhLiYaGu2014,SaTa2007,SaTa2011,VaDa2009,WaLiRe2006,YaZh2006,
YaYuDe2008,ZhWuJiRh2014,ZuLiJiGu2009}.

The goal of this paper is to obtain the explicit solution to phase-change process (\ref{1})-(\ref{7}) with one unknown thermal coefficient, and the necessary and sufficient conditions on data in order to obtain an explicit formula for the unknown thermal coefficient.
In addition, we are interested in analysing the relationship between problem (\ref{1})-(\ref{7}) and the phase-change process given by (\ref{1})-(\ref{6}) beside the Dirichlet boundary condition overspecified at $x=0$ given by (\ref{8}) (see below). In particular, we are interested in solving the problem with Dirichlet boundary condition through problem with convective boundary condition when large values of the coefficient $h_0$ that characterizes the heat transfer at $x=0$ are considered.

The organization of the paper is as follows. In Section \ref{sec:ExpSol} we prove a preliminary result where necessary and sufficient conditions on data for the phase-change process (\ref{1})-(\ref{7}) are given in order to obtain the temperature $T=T(x,t)$ and the two free boundaries $x=r(t)$ and $x=s(t)$. Based on this preliminary result, in Section \ref{sec:ExpForm} we present and solve six different cases for the phase-change process (\ref{1})-(\ref{7}) according the choice of the unknown thermal coefficient among $l$, $k$, $\rho$, $c$, $\epsilon$ and $\gamma$.
In Section \ref{sec:largeh0} we discuss the relationship between the phase-change process (\ref{1})-(\ref{6}) with the Dirichlet boundary condition (\ref{8}) and the same process with the convective boundary condition (\ref{7}). We show that the temperature $T_D=T_D(x,t)$, the free boundaries $x=r_D(t)$ and $x=s_D(t)$, and the explicit formula for the unknown thermal coefficient $l$, $k$, $\rho$, $c$, $\epsilon$ or $\gamma$ for the phase-change process (\ref{1})-(\ref{6}) with the Dirichlet condition (\ref{8}) can be obtained through the phase-change process with convective condition given by (\ref{1})-(\ref{7}) when $h_0$ tends to $+\infty$. Explicit formulae for the unknown thermal coefficient for problems (\ref{1})-(\ref{7}) and (\ref{1})-(\ref{6}) and (\ref{8}), beside restrictions on data that guarantees their validity, are summarizes in Table 1 and 2 respectively.

\section{Explicit solution to the phase-change process}\label{sec:ExpSol}

The following lemma represents the base on which the work in this Section will be structured.

\begin{lemma}\label{th:0}
The solution to problem (\ref{1})-(\ref{7}) is given by:
\begin{align}
\label{sT}&T(x,t)=\frac{q_0\sqrt{\pi\alpha}}{k}\left[\erf\left(\frac{x}{2\sqrt{\alpha t}}\right)-\erf(\xi)\right]&0<x<s(t),\,\,&t>0\\
\label{ss}&s(t)=2\xi\sqrt{\alpha t}& &t>0\\
\label{sr}&r(t)=2\left[\xi+\frac{\gamma k}{2q_0\sqrt{\alpha}}\exp{(\xi^2)}\right]\sqrt{\alpha t}& &t>0
\end{align}
if and only if the parameters involved in problem (\ref{1})-(\ref{7})
satisfy the following two equations:
\begin{align}
\label{eq:1}&\left[\xi+\frac{\gamma k(1-\epsilon)}{2q_0\sqrt{\alpha}}\exp{(\xi^2)}\right]\exp{(\xi^2)}=\frac{q_0}{\rho l\sqrt{\alpha}}\\
\label{eq:2}&\erf(\xi)=\frac{k D_\infty}{q_0\sqrt{\pi\alpha}}\left(1-\frac{q_0}{h_0D_\infty}\right)
\end{align}
where $\alpha=\frac{k}{\rho c}$ represents the thermal diffusivity.
\end{lemma}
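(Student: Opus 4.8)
The strategy is to solve the heat equation in the solid phase with a self-similar ansatz and then translate each of the boundary/interface conditions \eqref{1}--\eqref{7} into a constraint. Because the problem is invariant under the parabolic scaling $x\mapsto\lambda x$, $t\mapsto\lambda^2 t$ (the data $q_0/\sqrt t$, $h_0/\sqrt t$ are compatible with it), I would look for $T(x,t)=F\!\left(x/(2\sqrt{\alpha t})\right)$ and free boundaries $s(t)=2\xi\sqrt{\alpha t}$, $r(t)=2\mu\sqrt{\alpha t}$ for constants $\xi,\mu>0$ to be determined. Substituting into \eqref{1} shows $F$ satisfies $F''+2\eta F'=0$, so $F(\eta)=A\,\erf(\eta)+B$. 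The vanishing condition \eqref{2} at $\eta=\xi$ gives $B=-A\,\erf(\xi)$, and the flux condition \eqref{6} at $x=0$, namely $kT_x(0,t)=q_0/\sqrt t$, fixes $A=q_0\sqrt{\pi\alpha}/k$ (using $\erf'(0)=2/\sqrt\pi$ and $\partial_x=\tfrac{1}{2\sqrt{\alpha t}}\partial_\eta$). This already yields \eqref{sT} and \eqref{ss}.

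**Deriving the two scalar equations.** Next I would feed the remaining conditions through. Condition \eqref{4}, $T_x(s(t),t)(r(t)-s(t))=\gamma$, evaluated with \eqref{sT}--\eqref{ss}, gives $\dfrac{q_0}{k}e^{-\xi^2}\cdot 2(\mu-\xi)\sqrt{\alpha t}\big/(2\sqrt{\alpha t})=\gamma$ after simplifying $T_x(s(t),t)=\dfrac{q_0}{k\sqrt{\pi\alpha t}}\cdot\dfrac{\sqrt\pi}{1}e^{-\xi^2}\cdot\dfrac{1}{2}$ — more precisely $T_x(s(t),t)=\dfrac{q_0}{k}\dfrac{1}{\sqrt{\alpha t}}e^{-\xi^2}\dfrac{1}{2}\cdot 2=\dfrac{q_0}{k\sqrt{\alpha t}}e^{-\xi^2}$; solving for $\mu-\xi$ produces exactly $\mu=\xi+\dfrac{\gamma k}{2q_0\sqrt\alpha}e^{\xi^2}$, i.e.\ \eqref{sr}. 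Then the Stefan-type condition \eqref{3}, $kT_x(s(t),t)=\rho l[\epsilon\dot s+(1-\epsilon)\dot r]$, with $\dot s=\xi\sqrt{\alpha/t}$, $\dot r=\mu\sqrt{\alpha/t}$, becomes $q_0 e^{-\xi^2}/\sqrt{t}=\rho l\sqrt{\alpha/t}\,[\epsilon\xi+(1-\epsilon)\mu]$; substituting the expression for $\mu$ and rearranging gives \eqref{eq:1}. Finally the overspecified convective condition \eqref{7}, $kT_x(0,t)=\tfrac{h_0}{\sqrt t}(T(0,t)+D_\infty)$, uses $kT_x(0,t)=q_0/\sqrt t$ and $T(0,t)=\dfrac{q_0\sqrt{\pi\alpha}}{k}(0-\erf(\xi))=-\dfrac{q_0\sqrt{\pi\alpha}}{k}\erf(\xi)$; this yields $q_0=h_0\!\left(-\dfrac{q_0\sqrt{\pi\alpha}}{k}\erf(\xi)+D_\infty\right)$, which I rearrange into \eqref{eq:2}.

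**Both directions and the main obstacle.** For the ``if'' direction I would simply verify that the functions \eqref{sT}--\eqref{sr}, assuming \eqref{eq:1}--\eqref{eq:2} hold, satisfy every one of \eqref{1}--\eqref{7}; this is the reverse of the computation above and is purely mechanical — the heat equation and \eqref{2}, \eqref{5}, \eqref{6} hold by construction of $A,B,\xi$-independent pieces, \eqref{4} holds by the definition of $r$, and \eqref{3}, \eqref{7} are exactly the two assumed equations read backwards. The genuinely nontrivial point is the ``only if'' direction: one must argue that \emph{every} solution of \eqref{1}--\eqref{7} is of the self-similar form, not merely that a self-similar solution exists. I expect the main obstacle to be this uniqueness/representation step — justifying that the similarity variable $\eta=x/(2\sqrt{\alpha t})$ captures all solutions, via the standard change of variables that reduces \eqref{1} with conditions \eqref{2}, \eqref{5}, \eqref{6} to an ODE boundary value problem whose general solution is the two-parameter family $A\,\erf(\eta)+B$, together with the observation that \eqref{5} forces the free boundaries through the origin and hence (by the scaling invariance) of the form $c\sqrt t$. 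Once the representation is pinned down, matching constants as above is routine, and the equivalence with \eqref{eq:1}--\eqref{eq:2} follows immediately.
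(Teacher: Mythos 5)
Your proposal is correct and takes essentially the same approach as the paper: the paper likewise starts from the similarity representation $T=A+B\erf\left(\frac{x}{2\sqrt{\alpha t}}\right)$, $s(t)=2\xi\sqrt{\alpha t}$, $r(t)=2\mu\sqrt{\alpha t}$ (justified there by citing the literature rather than, as you do, by the parabolic scaling reduction) and then determines $A$, $B$, $\mu$ and the compatibility equations (\ref{eq:1})--(\ref{eq:2}) by imposing conditions (\ref{2})--(\ref{4}), (\ref{6}) and (\ref{7}). The only blemish is the intermediate slip $T_x(s(t),t)=\frac{q_0}{k\sqrt{\alpha t}}\,e^{-\xi^2}$, which should read $\frac{q_0}{k\sqrt{t}}\,e^{-\xi^2}$; your stated conclusions for $\mu$, (\ref{eq:1}) and (\ref{eq:2}) are nevertheless the correct ones.
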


\begin{proof}
The kind of phase-change processes considered in this article have the following general solution \cite{SoWiAl1982,Ta1987,TaToAppear}:
\begin{align}
\label{s1}&T(x,t)=A+B\erf\left(\frac{x}{2\sqrt{\alpha t}}\right)& 0<x<s(t),\,&t>0\\
\label{s2}&s(t)=2\xi\sqrt{\alpha t}& &t>0\\
\label{s3}&r(t)=2\mu\sqrt{\alpha t}& &t>0
\end{align}
where coefficients $A$, $B$, $\xi$ and $\mu$ depend on the particular specifications of the phase-change process.

In orden to have the solution to problem (\ref{1})-(\ref{7}), we impose conditions (\ref{2})-(\ref{4}), (\ref{6}) and (\ref{7}) on (\ref{s1})-(\ref{s3}) and obtain that coefficients $A$, $B$ and $\mu$ must be given by:
\begin{equation}{\label{coef}}
A=-\frac{q_0\sqrt{\pi\alpha}}{k}\erf(\xi),\hspace{2cm}
B=\frac{q_0\sqrt{\pi\alpha}}{k}\hspace{1cm}
\text{and}\hspace{1cm}
\mu=\xi+\frac{\gamma k\exp{(\xi^2)}}{2q_0\sqrt{\alpha}}\text{,}
\end{equation}
which corresponds to solution (\ref{sT})-(\ref{sr}),
and the parameters involved in the problem must satisfy equations (\ref{eq:1}) and (\ref{eq:2}).
\end{proof}

As a consequence of Lemma \ref{th:0}, we know that we can solve the phase-change process (\ref{1})-(\ref{7}) with one unknown thermal coefficient through the determination of the parameter $\xi$ that characterizes one of the two free boundaries of the mushy zone and the unknown thermal coefficient among $l$, $k$, $\rho$, $c$, $\epsilon$ and $\gamma$. In addition, we also know from Lemma \ref{th:0} that we can do that by solving the system of equations (\ref{eq:1})-(\ref{eq:2}).

\section{Explicit formula for the unknown thermal coefficient}\label{sec:ExpForm}

In this Section we present and solve six different cases for the phase-change process (\ref{1})-(\ref{7}) according the choice of the unknown thermal coefficient among $l$, $k$, $\rho$, $c$, $\epsilon$ and $\gamma$.

With the aim of organizing our work, we classify each case by making reference to the coefficients which is necessary to know in order to solve it (see Lemma \ref{th:0}):\\

Case 1: Determination of $l$ and $\xi$,\hspace{2cm}
Case 2: Determination of $\gamma$ and $\xi$,\\

Case 3: Determination of $\epsilon$ and $\xi$,\hspace{2cm}
Case 4: Determination of $k$ and $\xi$,\\

Case 5: Determination of $\rho$ and $\xi$,\hspace{2cm}
Case 6: Determination of $c$ and $\xi$.\\

\noindent In addition, with the goal of make our presentation more readable, in the following statements and proofs we introduce several functions. We name this functions with a subscript according the case where they arise.

\begin{theorem}[Case 1: determination of $l$ and $\xi$]\label{th:1}
If in problem (\ref{1})-(\ref{7}) we consider the thermal parameter
$l$ as an unknown, then its solution is given by
(\ref{sT})-(\ref{sr}) with $l$ and $\xi$ given by:
\begin{align}
\label{l1}&l=\sqrt{\frac{c}{\rho k}}\frac{q_0\exp{(-\xi^2)}}{\left[\xi+\frac{\gamma(1-\epsilon)\sqrt{k\rho c}}{2q_0}\exp{(\xi^2)}\right]}\\
\label{xi1}&\xi=\erf^{-1}\left(\frac{D_\infty}{q_0}\sqrt{\frac{k\rho c}{\pi}}\left(1-\frac{q_0}{h_0D_\infty}\right)\right)
\end{align}
if and only if the parameters $q_0$, $h_0$, $D_\infty$, $k$, $\rho$ and $c$ satisfy the following two inequalities:
\begin{align}
&\label{R1} 1-\frac{q_0}{h_0D_\infty}>0\tag{R1}\\
&\label{R2} \frac{D_\infty}{q_0}\sqrt{\frac{k\rho c}{\pi}}\left(1-\frac{q_0}{h_0D_\infty}\right)<1\tag{R2}\text{.}
\end{align}
\end{theorem}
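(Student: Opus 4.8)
The plan is to derive the two formulae directly from the system \eqref{eq:1}--\eqref{eq:2} supplied by Lemma \ref{th:0}, treating $l$ and $\xi$ as the two unknowns. The key observation is that equation \eqref{eq:2} involves neither $l$ nor $r$ (only $\xi$, together with the measured data $q_0,h_0,D_\infty$ and the known coefficients $k,\rho,c$ hidden inside $\alpha$), so it decouples and determines $\xi$ by itself; once $\xi$ is known, equation \eqref{eq:1} is linear in $1/l$ and yields $l$ explicitly. So I would first substitute $\alpha=k/(\rho c)$ into \eqref{eq:2}, giving $\erf(\xi)=\frac{kD_\infty}{q_0\sqrt{\pi k/(\rho c)}}\bigl(1-\frac{q_0}{h_0D_\infty}\bigr)=\frac{D_\infty}{q_0}\sqrt{\frac{k\rho c}{\pi}}\bigl(1-\frac{q_0}{h_0D_\infty}\bigr)$, and then invert $\erf$ to obtain \eqref{xi1}.

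The inversion step is exactly where the restrictions \eqref{R1} and \eqref{R2} enter, and this is the main point requiring care rather than a genuine obstacle. Since $\erf:\mathbb{R}\to(-1,1)$ is a strictly increasing bijection, $\xi=\erf^{-1}(z)$ is well defined if and only if $z\in(-1,1)$. Here $z=\frac{D_\infty}{q_0}\sqrt{\frac{k\rho c}{\pi}}\bigl(1-\frac{q_0}{h_0D_\infty}\bigr)$. I must also recall from the modelling that $\xi>0$ is required (it characterizes the free boundary $s(t)=2\xi\sqrt{\alpha t}$ with $s(t)>0$), equivalently $z>0$; since $D_\infty,q_0,k,\rho,c>0$, the sign of $z$ is the sign of $1-\frac{q_0}{h_0D_\infty}$, which gives precisely \eqref{R1}. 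The upper bound $z<1$ is precisely \eqref{R2}, and $z>-1$ is then automatic. Hence \eqref{R1} and \eqref{R2} together are necessary and sufficient for a (unique, positive) $\xi$ to exist, and \eqref{xi1} records it.

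With $\xi$ in hand, I turn to \eqref{eq:1}: $\bigl[\xi+\frac{\gamma k(1-\epsilon)}{2q_0\sqrt{\alpha}}\exp(\xi^2)\bigr]\exp(\xi^2)=\frac{q_0}{\rho l\sqrt{\alpha}}$. Solving for $l$ gives $l=\dfrac{q_0}{\rho\sqrt{\alpha}\,\exp(\xi^2)\bigl[\xi+\frac{\gamma k(1-\epsilon)}{2q_0\sqrt{\alpha}}\exp(\xi^2)\bigr]}$, and substituting $\sqrt{\alpha}=\sqrt{k/(\rho c)}$ and simplifying $\frac{1}{\rho\sqrt{\alpha}}=\sqrt{\frac{c}{\rho k}}$ and $\frac{k}{\sqrt{\alpha}}=\sqrt{k\rho c}$ reproduces \eqref{l1}. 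One should note that this $l$ is automatically positive: the bracket is positive because $\xi>0$, $0<\epsilon<1$, $\gamma>0$, and all other quantities are positive, so no further restriction on the data is needed for the $l$-formula itself — \eqref{R1}--\eqref{R2} are exactly the conditions, coming entirely from the solvability of \eqref{eq:2}. Finally, by Lemma \ref{th:0} the triple $(T,s,r)$ given by \eqref{sT}--\eqref{sr} with these values of $l$ and $\xi$ is the solution, which completes the proof. I expect no real difficulty beyond bookkeeping the algebraic simplifications and being explicit about why $\xi>0$ forces \eqref{R1}.
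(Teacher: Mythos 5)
Your proposal is correct and follows essentially the same route as the paper: equation (\ref{eq:2}) decouples and is solved for $\xi$ by inverting the error function, with (\ref{R1})--(\ref{R2}) arising exactly as the necessary and sufficient conditions for a unique positive solution, and then (\ref{eq:1}) is solved for $l$, which is automatically positive. Your write-up simply makes explicit the algebraic substitution $\alpha=k/(\rho c)$ and the positivity check that the paper leaves implicit.
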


\begin{proof}
Due to properties of the error function, it follows that a necessary and sufficient condition for the existence and uniqueness of a positive solution to equation (\ref{eq:2}) is:
\begin{equation*}
0<\frac{D_\infty}{q_0}\sqrt{\frac{k\rho c}{\pi}}\left(1-\frac{q_0}{h_0D_\infty}\right)<1\text{,}
\end{equation*}
which is equivalent to inequalities (\ref{R1}) and (\ref{R2}). In that case, the positive solution to equation (\ref{eq:2}) is given by (\ref{xi1}). Finally, it follows from equation (\ref{eq:1}) that $l$ is the positive thermal coefficient given by (\ref{l1}).
\end{proof}

\begin{theorem}[Case 2: determination of $\gamma$ and $\xi$]\label{th:2}
If in problem (\ref{1})-(\ref{7}) we consider the thermal parameter
$\gamma$ as an unknown, then its solution is given by
(\ref{sT})-(\ref{sr}) with $\gamma$ given by:
\begin{equation}\label{gamma2}
\gamma=\frac{2q_0}{(1-\epsilon)\sqrt{k\rho c}}\left(\frac{q_0}{l}\sqrt{\frac{c}{\rho k}}-f_2(\xi)\right)\exp{(-2\xi^2)}
\end{equation}
and $\xi$ given by (\ref{xi1}),
if and only if the parameters $q_0$, $h_0$, $D_\infty$, $k$, $\rho$, $c$ and $l$ satisfy inequalities (\ref{R1}), (\ref{R2}) and:
\begin{equation}\label{R3}
f_2\left(\erf^{-1}\left(\frac{D_\infty}{q_0}\sqrt{\frac{k\rho c}{\pi}}\left(1-\frac{q_0}{h_0D_\infty}\right)\right)\right)<
\frac{q_0}{l}\sqrt{\frac{c}{\rho k}}\tag{R3}\text{,}
\end{equation}
where the real function $f_2$ is defined by:
\begin{equation}\label{f2}
f_2(x)=x\exp{(x^2)},\hspace{0.5cm}x>0\text{.}
\end{equation}
\end{theorem}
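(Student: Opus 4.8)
The plan is to derive the formula for $\gamma$ directly from equation~(\ref{eq:1}) of Lemma~\ref{th:0}, treating $\gamma$ as the unknown, and to obtain $\xi$ from equation~(\ref{eq:2}) exactly as in Theorem~\ref{th:1}. Since equation~(\ref{eq:2}) does not involve $\gamma$, the analysis of that equation is unchanged: properties of the error function give that a positive solution $\xi$ exists and is unique if and only if $0<\frac{D_\infty}{q_0}\sqrt{\frac{k\rho c}{\pi}}\left(1-\frac{q_0}{h_0 D_\infty}\right)<1$, which is precisely the pair of inequalities (\ref{R1}) and (\ref{R2}), and in that case $\xi$ is given by (\ref{xi1}). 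So the first step is simply to invoke this part verbatim from the proof of Theorem~\ref{th:1}.

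The second step is to solve equation~(\ref{eq:1}) for $\gamma$ once $\xi$ is known. Rewriting (\ref{eq:1}), after using $\alpha=\frac{k}{\rho c}$ so that $\sqrt{\alpha}=\sqrt{\frac{k}{\rho c}}$ and $\frac{1}{\sqrt{\alpha}}=\sqrt{\frac{\rho c}{k}}$, one isolates the bracket:
\begin{equation*}
\xi+\frac{\gamma k(1-\epsilon)}{2q_0\sqrt{\alpha}}\exp(\xi^2)=\frac{q_0}{\rho l\sqrt{\alpha}}\exp(-\xi^2),
\end{equation*}
and then solves the resulting linear equation in $\gamma$. Substituting $\frac{1}{\sqrt{\alpha}}=\sqrt{\frac{\rho c}{k}}$ and simplifying the constants (note $\frac{k}{\rho}\sqrt{\frac{\rho c}{k}}=\sqrt{\frac{k c}{\rho}}$ and $\frac{q_0}{\rho}\sqrt{\frac{\rho c}{k}}=q_0\sqrt{\frac{c}{\rho k}}$) yields exactly formula (\ref{gamma2}) with $f_2(\xi)=\xi\exp(\xi^2)$ as defined in (\ref{f2}). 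This is a routine algebraic manipulation.

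The third step is to impose the sign constraint. Since all physical parameters are positive and $0<\epsilon<1$, the prefactor $\frac{2q_0}{(1-\epsilon)\sqrt{k\rho c}}\exp(-2\xi^2)$ in (\ref{gamma2}) is strictly positive; hence $\gamma>0$ if and only if $\frac{q_0}{l}\sqrt{\frac{c}{\rho k}}-f_2(\xi)>0$, i.e. $f_2(\xi)<\frac{q_0}{l}\sqrt{\frac{c}{\rho k}}$. Substituting the explicit value (\ref{xi1}) of $\xi$ into this inequality gives precisely condition (\ref{R3}). Conversely, if (\ref{R1}), (\ref{R2}) and (\ref{R3}) hold, then $\xi$ from (\ref{xi1}) and $\gamma$ from (\ref{gamma2}) are well defined and positive, they satisfy (\ref{eq:1})--(\ref{eq:2}) by construction, and Lemma~\ref{th:0} guarantees that (\ref{sT})--(\ref{sr}) is the solution of the phase-change process.

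I do not anticipate a genuine obstacle here: the only mild subtlety is bookkeeping the algebraic identities relating $\alpha$, $\sqrt{\alpha}$ and the grouped constants so that the final expression matches (\ref{gamma2}) exactly, and being careful that the equivalence ``$\gamma>0 \iff$ (\ref{R3})'' is stated as an \emph{if and only if}, matching the structure of the theorem. The positivity of $1-\epsilon$ is what makes the sign of the prefactor unambiguous, so it is worth noting explicitly.
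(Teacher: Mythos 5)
Your proposal is correct and follows essentially the same route as the paper's proof: obtain $\xi$ from equation (\ref{eq:2}) under (\ref{R1})--(\ref{R2}) exactly as in Theorem \ref{th:1}, solve the linear equation (\ref{eq:1}) for $\gamma$ to get (\ref{gamma2}), and observe that positivity of $\gamma$ is equivalent to (\ref{R3}) after substituting (\ref{xi1}). Your explicit mention of the converse direction via Lemma \ref{th:0} is only a slight elaboration of what the paper leaves implicit.
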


\begin{proof}
As we see in the proof of Theorem \ref{th:1}, a necessary and sufficient condition that guarantees the existence and uniqueness of solution to equation (\ref{eq:2}) is that inequalities (\ref{R1}) and (\ref{R2}) hold, and in that case, the coefficient $\xi$ is given by (\ref{xi1}).

On the other hand, it follows from equation (\ref{eq:1}) that $\gamma$ is given by (\ref{gamma2}). This coefficient is positive if and only if:
\begin{equation}\label{ineq}
f_2(\xi)<\frac{q_0}{l}\sqrt{\frac{c}{\rho k}}\text{,}
\end{equation}
where $f_2$ is the real function defined in (\ref{f2}).
Taking into account the expression of $\xi$ given in (\ref{xi1}), we have that inequality (\ref{ineq}) is equivalent to inequality (\ref{R3}).
\end{proof}

\begin{theorem}[Case 3: determination of $\epsilon$ and $\xi$]\label{th:3}
If in problem (\ref{1})-(\ref{7}) we consider the thermal parameter
$\epsilon$ as an unknown, then its solution is given by
(\ref{sT})-(\ref{sr}) with $\epsilon$ given by:
\begin{equation}\label{epsilon3}
\epsilon=1-\frac{2q_0}{\gamma \sqrt{k\rho c}}\left(\frac{q_0}{l}\sqrt{\frac{c}{\rho k}}-f_2(\xi)\right)\exp{(-2\xi^2)}
\end{equation}
and $\xi$ given by (\ref{xi1}),
if and only if the parameters $q_0$, $h_0$, $D_\infty$, $k$, $\rho$, $c$ and $\gamma$ satisfy inequalities (\ref{R1}), (\ref{R2}), (\ref{R3}) and:
\begin{equation}
\begin{split}\label{R4}
\frac{\gamma \sqrt{k\rho c}}{2q_0}\exp{\left(2\left[\erf^{-1}\left(\frac{D_\infty}{q_0}\sqrt{\frac{k\rho c}{\pi}}\left(1-\frac{q_0}{h_0D_\infty}\right)\right)\right]^2\right)}+\\
f_2\left(\erf^{-1}\left(\frac{D_\infty}{q_0}\sqrt{\frac{k\rho c}{\pi}}\left(1-\frac{q_0}{h_0D_\infty}\right)\right)\right)>\frac{q_0}{l} \sqrt{\frac{c}{\rho k}}\text{,}
\end{split}\tag{R4}
\end{equation}
where $f_2$ is the real function defined in (\ref{f2}).
\end{theorem}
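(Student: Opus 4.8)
The plan is to follow exactly the template established in the proofs of Theorems~\ref{th:1} and~\ref{th:2}: everything reduces to analysing the system (\ref{eq:1})--(\ref{eq:2}) with $\epsilon$ now playing the role of the free unknown (together with $\xi$). First I would observe that equation (\ref{eq:2}) does not involve $\epsilon$ at all, so the discussion of its solvability is literally the one from Theorem~\ref{th:1}: by the strict monotonicity and range $(0,1)$ of $\erf$ on $(0,+\infty)$, equation (\ref{eq:2}) has a (unique, positive) solution $\xi$ if and only if $0<\frac{D_\infty}{q_0}\sqrt{\frac{k\rho c}{\pi}}\bigl(1-\frac{q_0}{h_0 D_\infty}\bigr)<1$, i.e. if and only if (\ref{R1}) and (\ref{R2}) hold, and in that case $\xi$ is given by (\ref{xi1}).

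Next I would turn to equation (\ref{eq:1}) and solve it for $\epsilon$. Rewriting (\ref{eq:1}) as
\begin{equation*}
\xi\exp{(\xi^2)}+\frac{\gamma k(1-\epsilon)}{2q_0\sqrt{\alpha}}\exp{(2\xi^2)}=\frac{q_0}{\rho l\sqrt{\alpha}},
\end{equation*}
and using $\alpha=\frac{k}{\rho c}$ so that $\sqrt{\alpha}=\sqrt{k/(\rho c)}$ and $\frac{q_0}{\rho l\sqrt{\alpha}}=\frac{q_0}{l}\sqrt{\frac{c}{\rho k}}$, while $\frac{\gamma k}{2q_0\sqrt{\alpha}}=\frac{\gamma\sqrt{k\rho c}}{2q_0}$, I solve for $1-\epsilon$ and obtain precisely (\ref{epsilon3}) (note $\xi\exp{(\xi^2)}=f_2(\xi)$ with $f_2$ as in (\ref{f2})). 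This is the algebraic heart of the statement but it is a routine rearrangement identical in flavour to the one carried out in Theorem~\ref{th:2}.

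The remaining — and only mildly delicate — step is to impose the constraint $0<\epsilon<1$ and translate it into conditions on the data. Since $\exp{(-2\xi^2)}>0$ and $\frac{2q_0}{\gamma\sqrt{k\rho c}}>0$, the condition $\epsilon<1$ is equivalent to $\frac{q_0}{l}\sqrt{\frac{c}{\rho k}}-f_2(\xi)>0$, i.e. to $f_2(\xi)<\frac{q_0}{l}\sqrt{\frac{c}{\rho k}}$, which upon substituting (\ref{xi1}) is exactly (\ref{R3}); and the condition $\epsilon>0$ is equivalent to $\frac{2q_0}{\gamma\sqrt{k\rho c}}\bigl(\frac{q_0}{l}\sqrt{\frac{c}{\rho k}}-f_2(\xi)\bigr)\exp{(-2\xi^2)}<1$, which after multiplying through and rearranging becomes $\frac{\gamma\sqrt{k\rho c}}{2q_0}\exp{(2\xi^2)}+f_2(\xi)>\frac{q_0}{l}\sqrt{\frac{c}{\rho k}}$; substituting (\ref{xi1}) for $\xi$ yields precisely (\ref{R4}). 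I would close by noting that (\ref{R4}) automatically makes sense only once $\xi$ exists, i.e. under (\ref{R1})--(\ref{R2}), and that (\ref{R3}) is needed for $\epsilon<1$ while (\ref{R4}) is needed for $\epsilon>0$, so the full list (\ref{R1})--(\ref{R4}) is exactly necessary and sufficient.

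I do not anticipate a genuine obstacle here; the main thing to be careful about is bookkeeping of the algebraic identities $\sqrt{\alpha}=\sqrt{k/(\rho c)}$ when converting the $\alpha$-form of (\ref{eq:1}) into the data-form appearing in (\ref{epsilon3}) and (\ref{R4}), and making sure the two inequalities $\epsilon>0$ and $\epsilon<1$ are matched with (\ref{R4}) and (\ref{R3}) respectively (it is easy to swap them). Everything else is a verbatim repetition of the reasoning already displayed in Theorems~\ref{th:1} and~\ref{th:2}.
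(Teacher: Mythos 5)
Your proposal is correct and follows essentially the same route as the paper: $\xi$ and conditions (\ref{R1})--(\ref{R2}) come from equation (\ref{eq:2}) exactly as in Theorem \ref{th:1}, $\epsilon$ is obtained by solving (\ref{eq:1}), and the constraints $\epsilon>0$ and $\epsilon<1$ are translated into (\ref{R4}) and (\ref{R3}) respectively, which is precisely the paper's argument.
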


\begin{proof}
Conditions (\ref{R1}) and (\ref{R2}), and the expression of $\xi$ given in (\ref{xi1}) arise in the same way that in the precedent proofs. On the other hand, it follows from equation (\ref{eq:1}) that $\epsilon$ is given by (\ref{epsilon3}), being $f_2$ the real function defined in (\ref{f2}). This coefficient is positive if and only if:
\begin{equation}\label{ineqq}
\frac{2q_0}{\gamma \sqrt{k\rho c}}\left(\frac{q_0}{l}\sqrt{\frac{c}{\rho k}}-f_2(\xi)\right)\exp{(-2\xi^2)}<1\text{.}
\end{equation}
Taking into account the expression of $\xi$ given in (\ref{xi1}), we have that inequality (\ref{ineqq}) is equivalent to inequality (\ref{R4}). Finally, we have that $\epsilon$ given in (\ref{epsilon3}) is less than 1 if and only if $f_2(\xi)<\frac{q_0}{\rho l\sqrt{\alpha}}$, which, as we see in the proof of Theorem \ref{th:2}, is equivalent to condition (\ref{R3}).
\end{proof}

\begin{theorem}[Case 4: determination of $k$ and $\xi$]\label{th:4}
If in problem (\ref{1})-(\ref{7}) we consider the thermal parameter
$k$ as an unknown, then its solution is given by
(\ref{sT})-(\ref{sr}) with $k$ given by:
\begin{equation}\label{k4}
k=\frac{\pi}{\rho c}\left[\frac{q_0\erf(\xi)}{D_\infty\left(1-\frac{q_0}{h_0D_\infty}\right)}\right]^2
\end{equation}
and $\xi$ the unique solution of the equation:
\begin{equation}\label{eq:xi4}
f_4(x)=\frac{cD_\infty}{l\sqrt{\pi}}\left(1-\frac{q_0}{h_0D_\infty}\right),\hspace{0.5cm}x>0\text{,}
\end{equation}
where $f_4$ is the real function defined by:
\begin{equation}\label{f4}
f_4(x)=\left[x+\frac{\gamma\sqrt{\pi}(1-\epsilon)}{2D_\infty\left(1-\frac{q_0}{h_0D_\infty}\right)}\erf(x)\exp{(x^2)}\right]\erf(x)\exp{(x^2)},\hspace{0.5cm}x>0\text{,}
\end{equation}
if and only if the parameters $q_0$, $h_0$ and $D_\infty$ satisfy inequality (\ref{R1}).
\end{theorem}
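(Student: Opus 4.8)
The plan is to invoke Lemma~\ref{th:0}, which reduces solving problem (\ref{1})-(\ref{7}) with $k$ unknown to finding a pair $(k,\xi)$ with $k>0$ and $\xi>0$ (the positivity of $\xi$ being forced by the non-degeneracy of the solid region $D_s$) satisfying the system (\ref{eq:1})-(\ref{eq:2}). First I would extract $k$ from (\ref{eq:2}): since $\alpha=\frac{k}{\rho c}$ one has $\frac{k}{\sqrt{\alpha}}=\sqrt{k\rho c}$, so (\ref{eq:2}) reads
\begin{equation*}
\erf(\xi)=\frac{\sqrt{k\rho c}\,D_\infty}{q_0\sqrt{\pi}}\left(1-\frac{q_0}{h_0D_\infty}\right),
\end{equation*}
and solving for $k$ yields exactly formula (\ref{k4}). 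This expression is meaningful and delivers a genuine positive coefficient precisely when $\erf(\xi)>0$ (i.e. $\xi>0$) and $1-\frac{q_0}{h_0D_\infty}>0$, that is, exactly when (\ref{R1}) holds; conversely, any solution of the problem requires $\xi>0$, whence (\ref{eq:2}) together with $\erf(\xi)>0$ forces (\ref{R1}). Hence (\ref{R1}) is the necessary and sufficient condition regarding equation (\ref{eq:2}).

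Next I would substitute this relation back into (\ref{eq:1}). Using $\frac{k}{\sqrt{\alpha}}=\sqrt{k\rho c}$ and $\frac{1}{\rho\sqrt{\alpha}}=\sqrt{\frac{c}{\rho k}}=\frac{c}{\sqrt{k\rho c}}$, equation (\ref{eq:1}) becomes
\begin{equation*}
\left[\xi+\frac{\gamma(1-\epsilon)\sqrt{k\rho c}}{2q_0}\exp(\xi^2)\right]\exp(\xi^2)=\frac{q_0c}{l\sqrt{k\rho c}}.
\end{equation*}
Replacing $\sqrt{k\rho c}$ by its value coming from (\ref{eq:2}) and then multiplying both sides by $\erf(\xi)$ — a reversible step since $\xi>0$ — collapses this into the scalar equation $f_4(\xi)=\frac{cD_\infty}{l\sqrt{\pi}}\left(1-\frac{q_0}{h_0D_\infty}\right)$, with $f_4$ the function defined in (\ref{f4}); this is precisely equation (\ref{eq:xi4}).

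It then remains to show (\ref{eq:xi4}) has a unique positive root under (\ref{R1}). I would set $g(x)=\erf(x)\exp(x^2)$ and compute $g'(x)=\frac{2}{\sqrt{\pi}}+2x\exp(x^2)\erf(x)>0$ for $x>0$, so $g$ is positive and strictly increasing on $(0,\infty)$ with $g(0^+)=0$ and $g(+\infty)=+\infty$. Under (\ref{R1}) the constant $C=\frac{\gamma\sqrt{\pi}(1-\epsilon)}{2D_\infty(1-q_0/(h_0D_\infty))}$ is strictly positive, so $f_4(x)=\left[x+C\,g(x)\right]g(x)$ is a product of two positive, strictly increasing functions, each vanishing at $0^+$ and diverging at $+\infty$; hence $f_4$ is a continuous increasing bijection of $(0,\infty)$ onto $(0,\infty)$. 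Since (\ref{R1}) also makes the right-hand side of (\ref{eq:xi4}) a positive constant, (\ref{eq:xi4}) has exactly one positive solution $\xi$, and feeding it into (\ref{k4}) produces $k>0$. The main obstacle I anticipate is purely organizational: carefully propagating the dependence of $\alpha$ on the unknown $k$ through both equations so that (\ref{eq:1}) genuinely reduces to the one-variable equation (\ref{eq:xi4}); once that reduction is in place, the monotonicity of $f_4$ is routine.
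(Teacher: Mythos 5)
Your proposal is correct and follows essentially the same route as the paper: rewrite the system (\ref{eq:1})--(\ref{eq:2}) into the equivalent pair giving $k$ explicitly and the scalar equation $f_4(\xi)=\frac{cD_\infty}{l\sqrt{\pi}}\left(1-\frac{q_0}{h_0D_\infty}\right)$, note that (\ref{R1}) is necessary and sufficient, and conclude via the monotonicity of $f_4$ with $f_4(0^+)=0$ and $f_4(+\infty)=+\infty$. You merely supply more detail (the derivative of $\erf(x)\exp(x^2)$ and the explicit substitution of $\sqrt{k\rho c}$) than the paper, which states these facts without computation.
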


\begin{proof}
The system of equations (\ref{eq:1})-(\ref{eq:2}) is equivalent to:
\begin{align}
&\sqrt{k}=\sqrt{\frac{\pi}{\rho c}}\frac{q_0\erf(\xi)}{D_\infty\left(1-\frac{q_0}{h_0D_\infty}\right)}\\
&f_4(\xi)=\frac{cD_\infty}{l\sqrt{\pi}}\left(1-\frac{q_0}{h_0D_\infty}\right)\text{,}
\end{align}
where $f_4$ is the real function defined in (\ref{f4}). A necessary condition for existence of solution to this system is that inequality (\ref{R1}) holds. Then, if we assume that (\ref{R1}) holds, we inmediately obtain that $k$ is given by (\ref{k4}). To complete the proof only remains to demonstrate that equation (\ref{eq:xi4}) admits a unique positive solution. This follows from the fact that $f_4$ is an increasing function such that $f(0^+)=0$ and $f(+\infty)=+\infty$.
\end{proof}

\begin{theorem}[Case 5: determination of $\rho$ and $\xi$]\label{th:5}
If in problem (\ref{1})-(\ref{7}) we consider the thermal parameter
$\rho$ as an unknown, then its solution is given by
(\ref{sT})-(\ref{sr}) with $\rho$ given by:
\begin{equation}\label{rho5}
\rho=\frac{\pi}{k c}\left[\frac{q_0\erf(\xi)}{D_\infty\left(1-\frac{q_0}{h_0D_\infty}\right)}\right]^2
\end{equation}
and $\xi$ the unique solution of the equation (\ref{eq:xi4}),
if and only if the parameters $q_0$, $h_0$ and $D_\infty$ satisfy inequality (\ref{R1}).
\end{theorem}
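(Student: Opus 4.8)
The plan is to follow the proof of Theorem~\ref{th:4} almost verbatim, the point being that $k$ and $\rho$ occur in the system (\ref{eq:1})--(\ref{eq:2}) in a symmetric fashion through the diffusivity $\alpha=\frac{k}{\rho c}$. Indeed $\frac{k}{\sqrt{\alpha}}=\sqrt{k\rho c}$ and $\rho\sqrt{\alpha}=\frac{1}{c}\sqrt{k\rho c}$, so both equations can be expressed through the single quantity $\sqrt{k\rho c}$ (together with the known constant $c$); hence every manipulation made in Case~4 to isolate $\sqrt{k}$ carries over with $\sqrt{k}$ replaced by $\sqrt{\rho}$.

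Concretely, I would first rewrite equation (\ref{eq:2}) as
\begin{equation*}
\sqrt{\rho}=\sqrt{\frac{\pi}{kc}}\;\frac{q_0\,\erf(\xi)}{D_\infty\left(1-\frac{q_0}{h_0D_\infty}\right)}\text{,}
\end{equation*}
which is meaningful exactly when the denominator does not vanish, i.e.\ when (R1) holds; note that under (R1) the right-hand side is automatically positive, so $\rho>0$ comes for free and no analogue of (R2) is needed. Squaring gives formula (\ref{rho5}). Substituting $\sqrt{k\rho c}=\frac{q_0\sqrt{\pi}\,\erf(\xi)}{D_\infty\left(1-q_0/(h_0D_\infty)\right)}$ into equation (\ref{eq:1}) and multiplying the resulting identity by $\erf(\xi)$ turns (\ref{eq:1}) into
\begin{equation*}
f_4(\xi)=\frac{cD_\infty}{l\sqrt{\pi}}\left(1-\frac{q_0}{h_0D_\infty}\right)\text{,}
\end{equation*}
that is, equation (\ref{eq:xi4}) with exactly the same function $f_4$ from (\ref{f4}).

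It then only remains to see that this equation has a unique positive root, which is precisely the monotonicity argument already used in Theorem~\ref{th:4}: $f_4$ is continuous and strictly increasing on $(0,+\infty)$ with $f_4(0^+)=0$ and $f_4(+\infty)=+\infty$, while by (R1) the right-hand side is a positive constant. Thus $\xi$ exists and is unique, and $\rho$ is then recovered from (\ref{rho5}).

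The main --- and essentially only --- obstacle is the bookkeeping: one must check that eliminating $\rho$ rather than $k$ really reproduces the identical function $f_4$. This hinges on the observation that the extra factor $\erf(\xi)$ coming from $\rho\sqrt{\alpha}=\frac{1}{c}\sqrt{k\rho c}$ on the right-hand side of (\ref{eq:1}) is exactly what is needed to convert the bracket on the left-hand side into $f_4$. Once this is verified, no restriction beyond (R1) appears, in contrast with Cases~1--3, where $\rho$ is known and the bounds (R2)--(R4) are required to keep $\erf^{-1}$ well defined and the remaining thermal coefficient positive.
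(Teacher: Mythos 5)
Your proposal is correct and is essentially the paper's own argument: the paper proves Case 5 by noting it "is similar to the proof of Theorem \ref{th:4}", and what you do is exactly that reduction, solving (\ref{eq:2}) for $\sqrt{\rho}$ via $\sqrt{k\rho c}$, recovering the same equation (\ref{eq:xi4}) with the same $f_4$, and invoking the monotonicity of $f_4$ under (\ref{R1}). The only nuance worth stating a bit more carefully is the necessity of (\ref{R1}): since $\xi>0$ forces $\erf(\xi)>0$ in (\ref{eq:2}), any solution requires $1-\frac{q_0}{h_0D_\infty}>0$, not merely a nonvanishing denominator.
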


\begin{proof}
It is similar to the proof of Theorem \ref{th:4}.
\end{proof}

\begin{theorem}[Case 6: determination of $c$ and $\xi$]\label{th:6}
If in problem (\ref{1})-(\ref{7}) we consider the thermal parameter
$c$ as an unknown, then its solution is given by
(\ref{sT})-(\ref{sr}) with $c$ given by:
\begin{equation}\label{c6}
c=\frac{\pi}{\rho k}\left[\frac{q_0\erf(\xi)}{D_\infty\left(1-\frac{q_0}{h_0D_\infty}\right)}\right]^2
\end{equation}
and $\xi$ the unique solution of the equation:
\begin{equation}\label{eq:xi6}
f_6(x)=\frac{q_0^2\sqrt{\pi}}{\rho lkD_\infty\left(1-\frac{q_0}{h_0D_\infty}\right)},\hspace{0.5cm}x>0\text{,}
\end{equation}
where $f_6$ is the real function defined by:
\begin{equation}\label{f6}
f_6(x)=\left[\frac{x}{\erf(x)}+\frac{\gamma\sqrt{\pi}(1-\epsilon)}{2D_\infty\left(1-\frac{q_0}{h_0D_\infty}\right)}\exp{(x^2)}\right]\exp{(x^2)},\hspace{0.5cm}x>0\text{,}
\end{equation}
if and only if the parameters $q_0$, $h_0$ and $D_\infty$ satisfy inequalities (\ref{R1}) and:
\begin{equation}\label{R5}
1-\frac{q_0}{h_0D_\infty}<\frac{1}{D_\infty}\left[\frac{2q_0^2}{\rho lk}-\gamma(1-\epsilon)\right]\tag{R5}\text{.}
\end{equation}
\end{theorem}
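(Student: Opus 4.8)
The plan is to follow exactly the template established in the proofs of Theorems~\ref{th:1}--\ref{th:5}, adapting the algebra to the choice of $c$ as the unknown. First I would rewrite the system (\ref{eq:1})--(\ref{eq:2}) so that $c$ appears isolated in one equation and $\xi$ alone in the other. From (\ref{eq:2}), using $\alpha=k/(\rho c)$, one obtains $\erf(\xi)=\dfrac{D_\infty}{q_0}\sqrt{\dfrac{\rho k c}{\pi}}\bigl(1-\dfrac{q_0}{h_0D_\infty}\bigr)$, which, solved for $\sqrt{c}$, yields $\sqrt{c}=\sqrt{\dfrac{\pi}{\rho k}}\,\dfrac{q_0\erf(\xi)}{D_\infty(1-q_0/(h_0D_\infty))}$ and hence formula (\ref{c6}) upon squaring; as in Case~4 this step forces the necessary condition (\ref{R1}) so that the right-hand side is positive. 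Substituting this expression for $c$ into (\ref{eq:1}) and simplifying the powers of $\exp(\xi^2)$, $\erf(\xi)$, $q_0$, $k$, $\rho$, $\gamma$, $\epsilon$ gives precisely the scalar equation $f_6(\xi)=\dfrac{q_0^2\sqrt{\pi}}{\rho l k D_\infty(1-q_0/(h_0D_\infty))}$ with $f_6$ as in (\ref{f6}).

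The remaining and only substantive point is to show that equation (\ref{eq:xi6}) has a unique positive solution under hypotheses (\ref{R1}) and (\ref{R5}). Here the situation differs from Case~4: the function $f_4$ there satisfies $f_4(0^+)=0$, but $f_6(x)=\Bigl[\dfrac{x}{\erf(x)}+\dfrac{\gamma\sqrt{\pi}(1-\epsilon)}{2D_\infty(1-q_0/(h_0D_\infty))}\exp(x^2)\Bigr]\exp(x^2)$ has a strictly positive limit at $0^+$, namely $\lim_{x\to0^+}\dfrac{x}{\erf(x)}=\dfrac{\sqrt{\pi}}{2}$, so that $f_6(0^+)=\dfrac{\sqrt{\pi}}{2}+\dfrac{\gamma\sqrt{\pi}(1-\epsilon)}{2D_\infty(1-q_0/(h_0D_\infty))}=\dfrac{\sqrt{\pi}}{2D_\infty(1-q_0/(h_0D_\infty))}\bigl[D_\infty(1-q_0/(h_0D_\infty))+\gamma(1-\epsilon)\bigr]$. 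I would then verify that $f_6$ is strictly increasing on $(0,\infty)$ — both $x/\erf(x)$ and $\exp(x^2)$ are increasing, being products and sums of positive increasing factors — and that $f_6(+\infty)=+\infty$. Consequently (\ref{eq:xi6}) has a (unique) positive root if and only if the target value exceeds $f_6(0^+)$, i.e.
\begin{equation*}
\frac{q_0^2\sqrt{\pi}}{\rho l k D_\infty(1-q_0/(h_0D_\infty))}>\frac{\sqrt{\pi}}{2D_\infty(1-q_0/(h_0D_\infty))}\bigl[D_\infty(1-q_0/(h_0D_\infty))+\gamma(1-\epsilon)\bigr].
\end{equation*}
Cancelling the common positive factor $\dfrac{\sqrt{\pi}}{D_\infty(1-q_0/(h_0D_\infty))}$ (positive precisely by (\ref{R1})) and rearranging gives $D_\infty(1-q_0/(h_0D_\infty))<\dfrac{2q_0^2}{\rho l k}-\gamma(1-\epsilon)$, which is exactly inequality (\ref{R5}) after dividing by $D_\infty$.

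The main obstacle, such as it is, will be the bookkeeping in the substitution of (\ref{c6}) into (\ref{eq:1}): one must carefully track that $\sqrt{c/(\rho k)}$, $\sqrt{k\rho c}$ and $\alpha^{-1/2}=\sqrt{\rho c/k}$ all re-express in terms of $\erf(\xi)$ and the data, and that the factor $x/\erf(x)$ in $f_6$ (as opposed to the factor $\erf(x)$ in $f_4$) emerges correctly from dividing rather than multiplying by $\erf(\xi)$. Once the scalar equation is in the stated form, the monotonicity argument and the threshold computation at $0^+$ are routine; I would present them briefly, noting the limit $x/\erf(x)\to\sqrt{\pi}/2$ as $x\to0^+$ and the elementary monotonicity of each factor, and conclude that (\ref{R1}) and (\ref{R5}) are together necessary and sufficient for the existence (and uniqueness) of the positive solution $\xi$, hence for the solvability of Case~6. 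This completes the proof.
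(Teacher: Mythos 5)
Your proposal is correct and follows essentially the same route as the paper's proof: reduce (\ref{eq:1})--(\ref{eq:2}) to the explicit formula for $\sqrt{c}$ plus the scalar equation $f_6(\xi)=\frac{q_0^2\sqrt{\pi}}{\rho lkD_\infty\left(1-\frac{q_0}{h_0D_\infty}\right)}$, note that (\ref{R1}) is forced, and use the monotonicity of $f_6$ together with its limits at $0^+$ and $+\infty$ to get existence and uniqueness exactly when the right-hand side exceeds $f_6(0^+)$, which is (\ref{R5}). In fact your value $f_6(0^+)=\frac{\sqrt{\pi}}{2}+\frac{\gamma\sqrt{\pi}(1-\epsilon)}{2D_\infty\left(1-\frac{q_0}{h_0D_\infty}\right)}$ is the correct one (the paper's proof writes $\frac{\pi}{2}$ for the first term, an apparent slip), and it is precisely your value that makes the threshold condition equivalent to (\ref{R5}) as stated.
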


\begin{proof}
The system of equations (\ref{eq:1})-(\ref{eq:2}) is equivalent to:
\begin{align}
&\sqrt{c}=\sqrt{\frac{\pi}{\rho k}}\frac{q_0\erf(\xi)}{D_\infty\left(1-\frac{q_0}{h_0D_\infty}\right)}\\
&f_6(x)=\frac{q_0^2\sqrt{\pi}}{\rho lkD_\infty\left(1-\frac{q_0}{h_0D_\infty}\right)}
\end{align}
where $f_6$ is the real function defined in (\ref{f6}). A necessary condition for existence of solution to this system is that inequality (\ref{R1}) holds. Then, if we assume that (\ref{R1}) holds, we immediately obtain that $c$ is given by (\ref{c6}). To complete the proof only remains to demonstrate that equation (\ref{eq:xi6}) admits a unique positive solution. Since $f_6$ is an increasing function such that $f(0^+)=\frac{\pi}{2}+\frac{\gamma(1-\epsilon)\sqrt{\pi}}{2D_\infty\left(1-\frac{q_0}{h_0D_\infty}\right)}$ and $f(+\infty)=+\infty$, it follows that a necessary and sufficient condition for existence (and uniqueness) of solution to equation (\ref{eq:xi6}) is that:
\begin{equation*}
\frac{\pi}{2}+\frac{\gamma(1-\epsilon)\sqrt{\pi}}{2D_\infty\left(1-\frac{q_0}{h_0D_\infty}\right)}<
\frac{q_0^2\sqrt{\pi}}{\rho lkD_\infty\left(1-\frac{q_0}{h_0D_\infty}\right)}\text{,}
\end{equation*}
which is equivalent to inequality (\ref{R5}).
\end{proof}

Table 1 summarizes the results of this section, corresponding to 6 cases.

\begin{sidewaystable}[ph!]
\begin{tabular}{|c|c|c|c|}
\hline
& & & \\
Case&Thermal coefficient&Coefficient $\xi$ that characterizes& Restrictions on data\\
&& the free boundary $x=s(t)$& \\
& & & \\
\hline
& & & \\
1&$l=\sqrt{\frac{c}{\rho k}}\frac{q_0\exp{(-\xi^2)}}{\left[\xi+\frac{\gamma(1-\epsilon)\sqrt{k\rho c}}{2q_0}\exp{(\xi^2)}\right]}$ &
$\xi=\erf^{-1}\left(\frac{D_\infty}{q_0}\sqrt{\frac{k\rho c}{\pi}}\left(1-\frac{q_0}{h_0D_\infty}\right)\right)$ & (\ref{R1}), (\ref{R2})\\
& & & \\
\hline
& & & \\
2&\hspace*{0.5cm}$\gamma=\frac{2q_0}{(1-\epsilon)\sqrt{k\rho c}}\left(\frac{q_0}{l}\sqrt{\frac{c}{\rho k}}-\xi\exp{(\xi^2)}\right)\exp{(-2\xi^2)}$\hspace{0.5cm} &
$\xi=\erf^{-1}\left(\frac{D_\infty}{q_0}\sqrt{\frac{k\rho c}{\pi}}\left(1-\frac{q_0}{h_0D_\infty}\right)\right)$ & (\ref{R1}), (\ref{R2}), (\ref{R3})\\
& & &\\
\hline
& & & \\
3&$\epsilon=1-\frac{2q_0}{\gamma \sqrt{k\rho c}}\left(\frac{q_0}{l}\sqrt{\frac{c}{\rho k}}-\xi\exp{(\xi^2)}\right)\exp{(-2\xi^2)}$ &
$\xi=\erf^{-1}\left(\frac{D_\infty}{q_0}\sqrt{\frac{k\rho c}{\pi}}\left(1-\frac{q_0}{h_0D_\infty}\right)\right)$ & (\ref{R1}), (\ref{R2}), (\ref{R3}), (\ref{R4})\\
& & &\\
\hline
& & & \\
& & Unique positive solution of:& \\
4&$k=\frac{\pi}{\rho c}\left[\frac{q_0\erf(\xi)}{D_\infty\left(1-\frac{q_0}{h_0D_\infty}\right)}\right]^2$&$f_4(x)=\frac{cD_\infty}{l\sqrt{\pi}}\left(1-\frac{q_0}{h_0D_\infty}\right)$,
with $f_4$ defined by (\ref{f4})&(\ref{R1})\\
& & &\\
\hline
& & & \\
& & Unique positive solution of:& \\
5&$\rho=\frac{\pi}{kc}\left[\frac{q_0\erf(\xi)}{D_\infty\left(1-\frac{q_0}{h_0D_\infty}\right)}\right]^2$&$f_4(x)=\frac{cD_\infty}{l\sqrt{\pi}}\left(1-\frac{q_0}{h_0D_\infty}\right)$,
with $f_4$ defined by (\ref{f4})&(\ref{R1})\\
& & &\\
\hline
& & & \\
& & Unique positive solution of:&\\
6&$c=\frac{\pi}{\rho k}\left[\frac{q_0\erf(\xi)}{D_\infty\left(1-\frac{q_0}{h_0D_\infty}\right)}\right]^2$&$f_6(x)=\frac{q_0^2\sqrt{\pi}}{\rho lkD_\infty\left(1-\frac{q_0}{h_0D_\infty}\right)}$,
with $f_6$ defined by (\ref{f6})&(\ref{R1}),
(\ref{R5})\\
& & &\\
\hline
\end{tabular}
\caption{{\sc Formulae for problem (\ref{1})-(\ref{7})}. Explicit formulae for the unknown thermal coefficient $l$, $\gamma$, $\epsilon$, $k$, $\rho$ or $c$ and coefficient $\xi$\\ (or the equation that it must satisfy) and the corresponding restrictions on data that guarantees their validity.}
\end{sidewaystable}

\section{The phase-change process with large heat transfer coefficient}\label{sec:largeh0}

A similar phase-change process to (\ref{1})-(\ref{7}) with one unknown thermal coefficient have been studied in \cite{Ta1987}. In that paper, the author consider a fusion process with a mushy zone given by (\ref{1})-(\ref{6}) overspecified with a temperature boundary condition and obtain the explicit solution to some cases. Encouraged by \cite{Ta1987}, let us consider the solidification process (\ref{1})-(\ref{6}) with one unknown thermal coefficient overspecified with the Dirichlet boundary condition:
\begin{equation}\label{8}
T(0,t)=-D_\infty,\hspace{0.5cm}t>0\text{.}
\end{equation}
We can see this condition as the limit case of the convective boundary condition (\ref{7}) when the heat transfer coefficient $h_0$ tends to $+\infty$. From a physical point of view, overspecify the phase-change process (\ref{1})-(\ref{6}) by imposing the convective boundary condition (\ref{7}) seems to be more appropriate than imposing the Dirichlet condition (\ref{8}). This Section is devoted to show that the temperature $T_D=T_D(x,t)$, the free boundaries $x=r_D(t)$ and $x=s_D(t)$, and the explicit formula for the unknown thermal coefficient $l$, $k$, $\rho$, $c$, $\epsilon$ or $\gamma$ for the phase-change process with Dirichlet boundary condition given by (\ref{1})-(\ref{6}) and (\ref{8}) can be obtained through the phase-change process with convective boundary condition given by (\ref{1})-(\ref{7}) when $h_0$ tends to $+\infty$.

We begin with a result related to the solution to the phase-change process (\ref{1})-(\ref{6}) and (\ref{8}). This result may be shown in much the same manner as Lemma \ref{th:0}, thus we do not give its proof here.
\begin{lemma}\label{th:00}
The solution to problem (\ref{1})-(\ref{6}) and (\ref{8}) is given by (\ref{sT})-(\ref{sr}), that is:
\begin{align*}
&T_D(x,t)=\frac{q_0\sqrt{\pi\alpha}}{k}\left[\erf\left(\frac{x}{2\sqrt{\alpha t}}\right)-\erf(\xi)\right]&0<x<s_D(t),\,\,&t>0\\
&s_D(t)=2\xi\sqrt{\alpha t}& &t>0\\
&r_D(t)=2\left[\xi+\frac{\gamma k}{2q_0\sqrt{\alpha}}\exp{(\xi^2)}\right]\sqrt{\alpha t}& &t>0\text{,}
\end{align*}
where $\alpha=\frac{k}{\rho c}$ represents the thermal diffusivity, if and only if the parameters involved in problem (\ref{1})-(\ref{6}) and (\ref{8}) satisfy the following two equations:
\begin{align}
\label{eq:11}&\left[\xi+\frac{\gamma (1-\epsilon)\sqrt{\pi}}{2D_\infty}\erf{(\xi)}\exp{(\xi^2)}\right]\erf{(\xi)}\exp{(\xi^2)}=\frac{c D_\infty }{l\sqrt{\pi}}\\
\label{eq:21}&\erf(\xi)=\frac{D_\infty}{q_0}\sqrt{\frac{k\rho c}{\pi}}\text{.}
\end{align}
\end{lemma}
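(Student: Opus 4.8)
The plan is to mirror the proof of Lemma~\ref{th:0} exactly, substituting the Dirichlet condition (\ref{8}) for the convective condition (\ref{7}) in the last step where the boundary data at $x=0$ is imposed. First I would start from the general form (\ref{s1})--(\ref{s3}) of the solution, which is inherited from the same references and holds because the heat equation (\ref{1}) together with the similarity scaling of the free boundaries forces $T$ to be an affine function of $\erf\!\left(x/(2\sqrt{\alpha t})\right)$ and forces $s(t)$ and $r(t)$ to be proportional to $\sqrt{t}$; the constants $A$, $B$, $\xi$, $\mu$ are then pinned down by the five remaining conditions.

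Next I would impose conditions (\ref{2}), (\ref{4}) and (\ref{6}) precisely as in the proof of Lemma~\ref{th:0}: condition (\ref{2}), $T(s(t),t)=0$, gives $A+B\erf(\xi)=0$; condition (\ref{6}), $kT_x(0,t)=q_0/\sqrt t$, gives $B = q_0\sqrt{\pi\alpha}/k$ after differentiating (\ref{s1}) and evaluating at $x=0$; combining these yields $A=-\frac{q_0\sqrt{\pi\alpha}}{k}\erf(\xi)$, so the temperature is exactly (\ref{sT}). Condition (\ref{4}), $T_x(s(t),t)(r(t)-s(t))=\gamma$, then fixes $\mu=\xi+\frac{\gamma k}{2q_0\sqrt\alpha}\exp(\xi^2)$, giving (\ref{ss})--(\ref{sr}). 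Condition (\ref{3}), the Stefan condition at $x=s(t)$, after inserting $\dot s$ and $\dot r$ and $T_x(s(t),t)=\frac{q_0}{k\sqrt t}\exp(-\xi^2)$, reduces to $\left[\xi+\frac{\gamma k(1-\epsilon)}{2q_0\sqrt\alpha}\exp(\xi^2)\right]\exp(\xi^2)=\frac{q_0}{\rho l\sqrt\alpha}$; rewriting $\alpha=k/(\rho c)$ and multiplying through by $\erf(\xi)\exp(\xi^2)$ where appropriate transforms this into the stated equation (\ref{eq:11}) after one substitutes the value of $\erf(\xi)$ coming from the last condition.

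The only genuinely new step is replacing (\ref{7}) by (\ref{8}): imposing $T(0,t)=-D_\infty$ on (\ref{sT}) gives $-\frac{q_0\sqrt{\pi\alpha}}{k}\erf(\xi)=-D_\infty$, i.e. $\erf(\xi)=\frac{D_\infty}{q_0}\sqrt{\frac{k\rho c}{\pi}}$, which is precisely (\ref{eq:21}) (and is the $h_0\to+\infty$ limit of (\ref{eq:2})). Substituting this expression for $\erf(\xi)$ into the Stefan-condition equation above, and using $\sqrt\alpha=\sqrt{k/(\rho c)}$ to convert the prefactors, produces (\ref{eq:11}) in the form displayed; conversely, if (\ref{eq:11})--(\ref{eq:21}) hold then (\ref{sT})--(\ref{sr}) satisfies all of (\ref{1})--(\ref{6}) and (\ref{8}) by construction. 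I do not expect a real obstacle here: every manipulation is algebraic bookkeeping already carried out for Lemma~\ref{th:0}, and the authors themselves note the proof is "much the same"; the only point requiring mild care is tracking the $\erf(\xi)$ factors when passing between the convective form (\ref{eq:1})--(\ref{eq:2}) and the Dirichlet form (\ref{eq:11})--(\ref{eq:21}), since (\ref{eq:11}) has been multiplied through by $\erf(\xi)\exp(\xi^2)$ relative to the bracketed factor in (\ref{eq:1}). For completeness one should also record that, as in Lemma~\ref{th:0}, the solution ansatz (\ref{s1})--(\ref{s3}) is the \emph{only} possible form, so the "only if" direction is immediate once the five conditions have been shown to force $A$, $B$, $\xi$, $\mu$ and the two compatibility equations.
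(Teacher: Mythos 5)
Your proposal is correct and follows exactly the route the paper intends: the authors omit the proof of Lemma~\ref{th:00} on the grounds that it is ``much the same'' as that of Lemma~\ref{th:0}, and you carry out precisely that argument, imposing (\ref{2})--(\ref{4}), (\ref{6}) on the similarity ansatz and replacing (\ref{7}) by (\ref{8}) to get (\ref{eq:21}), then using it to rewrite the Stefan-condition equation (\ref{eq:1}) as (\ref{eq:11}). The algebraic conversion (substituting $q_0=\frac{D_\infty\sqrt{k\rho c}}{\erf(\xi)\sqrt{\pi}}$ and multiplying by $\erf(\xi)$) indeed yields (\ref{eq:11}), so no gap remains.
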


From Lemma \ref{th:00} we know that, in order to have the temperature $T_D=T_D(x,t)$, the free boundaries $x=r_D(t)$ and
$x=s_D(t)$, and the unknown thermal coefficient $l$, $k$, $\rho$, $c$, $\epsilon$ or $\gamma$ for problem
(\ref{1})-(\ref{6}) and (\ref{8}), it is enough to find the unknown thermal coefficient and the parameter that
characterizes the free boundary $s_D(t)$. Proceeding analogously to the work done in \cite{Ta1987} or in Section
\ref{sec:ExpForm}, we can obtain the thermal coefficient $l$, $k$, $\rho$, $c$, $\epsilon$ or $\gamma$ and the
parameter $\xi$ for problem (\ref{1})-(\ref{6}) and (\ref{8}).
Formulae for those quantities, besides restrictions on data that guarantee their validity, are summarized in Table 2 (restrictions on data and definitions on functions mentioned in Table 2 are listed below the table).

\begin{sidewaystable}[hp!]
\begin{tabular}{|c|c|c|c|}
\hline
& & & \\
Case&Thermal coefficient&Coefficient $\xi$ that characterizes& Restrictions on data\\
& & the free boundary $x=s_D(t)$& \\
& & & \\
\hline
& & & \\
1&$l=\sqrt{\frac{c}{\rho k}}\frac{q_0\exp{(-\xi^2)}}{\left[\xi+\frac{\gamma(1-\epsilon)\sqrt{k\rho c}}{2q_0}\exp{(\xi^2)}\right]}$ &
$\xi=\erf^{-1}\left(\frac{D_\infty}{q_0}\sqrt{\frac{k\rho c}{\pi}}\right)$ & (\ref{R6})\\
& & & \\
\hline
& & & \\
2&\hspace*{0.5cm}$\gamma=\frac{2q_0}{(1-\epsilon)\sqrt{k\rho c}}\left(\frac{q_0}{l}\sqrt{\frac{c}{\rho k}}-\xi\exp{(\xi^2)}\right)\exp{(-2\xi^2)}$\hspace{0.5cm} &
$\xi=\erf^{-1}\left(\frac{D_\infty}{q_0}\sqrt{\frac{k\rho c}{\pi}}\right)$ & (\ref{R7})\\
& & & \\
\hline
& & & \\
3&$\epsilon=1-\frac{2q_0}{\gamma \sqrt{k\rho c}}\left(\frac{q_0}{l}\sqrt{\frac{c}{\rho k}}-\xi\exp{(\xi^2)}\right)\exp{(-2\xi^2)}$ &
$\xi=\erf^{-1}\left(\frac{D_\infty}{q_0}\sqrt{\frac{k\rho c}{\pi}}\right)$ & (\ref{R7}), (\ref{R8})\\
& & & \\
\hline
& & & \\
& & Unique positive solution of:& \\
4&$k=\frac{\pi}{\rho c}\left[\frac{q_0\erf(\xi)}{D_\infty}\right]^2$&$F_4(x)=\frac{cD_\infty}{l\sqrt{\pi}}$,
with $F_4$ defined by (\ref{F4})&\\
& & &\\
\hline
& & & \\
& & Unique positive solution of:& \\
5&$\rho=\frac{\pi}{kc}\left[\frac{q_0\erf(\xi)}{D_\infty}\right]^2$&$F_4(x)=\frac{cD_\infty}{l\sqrt{\pi}}$,
with $F_4$ defined by (\ref{F4})&\\
& & & \\
\hline
& & & \\
& & Unique positive solution of:&\\
6&$c=\frac{\pi}{\rho k}\left[\frac{q_0\erf(\xi)}{D_\infty}\right]^2$&$F_6(x)=\frac{q_0^2\sqrt{\pi}}{\rho lkD_\infty}$,
with $F_6$ defined by (\ref{F6})&(\ref{R9})\\
& & &\\
\hline
\end{tabular}
\caption{{\sc Formulae for problem (\ref{1})-(\ref{6}) and (\ref{8})}. Explicit formulae for the unknown thermal coefficient $l$, $\gamma$, $\epsilon$, $k$, $\rho$ or $c$\\ and coefficient $\xi$ (or the equation that it must satisfy) and the corresponding restrictions on data that guarantees their validity.}
\end{sidewaystable}

\hspace*{0.4cm} List of restrictions on data for problem (\ref{1})-(\ref{6}) and (\ref{8}) mentioned in Table 2:
\begin{equation}\label{R6}
\frac{D_\infty}{q_0}\sqrt{\frac{k\rho c}{\pi}}<1\tag{R6}\hspace{9.6cm}
\end{equation}
\begin{equation}\label{R7}
\frac{D_\infty}{q_0}\sqrt{\frac{k\rho c}{\pi}}<\erf{(\eta)}\tag{R7}\hspace{8.6cm}
\end{equation}
\hspace*{2.5cm}where $\eta$ is the unique positive solution to the equation:
\begin{equation}
f_2(x)=\frac{q_0}{l}\sqrt{\frac{c}{\rho k}},\hspace{0.5cm}x>0\text{,}
\end{equation}
\hspace*{2.5cm} being $f_2$ the real function defined in (\ref{f2}).
\begin{equation}\label{R8}
\frac{D_\infty}{q_0}\sqrt{\frac{k\rho c}{\pi}}>\erf{(\eta)},\hspace{0.5cm}x>0\text{,}\tag{R8}\hspace{6.8cm}
\end{equation}
\hspace*{2.5cm} where $\eta$ is the unique positive solution to the equation:
\begin{equation}
f_2(x)+\frac{\gamma\sqrt{k\rho c}}{2q_0}\exp{(2x^2)}=\frac{q_0}{l}\sqrt{\frac{c}{k\rho}},\hspace{0.5cm}x>0\text{,}
\end{equation}
\hspace*{2.5cm} being $f_2$ the real function defined in (\ref{f2}).
\begin{equation}\label{R9}
\frac{lk\rho D_\infty}{2q_0}\left(1+\frac{\gamma(1-\epsilon)}{D_0}\right)<1\tag{R9}\hspace{6.7cm}
\end{equation}

\hspace*{0.4cm} List of definitions of functions related to problem (\ref{1})-(\ref{6}) and (\ref{8}) mentioned in Table 2:
\begin{equation}\label{F4}
F_4(x)=\erf{(x)}f_2(x)+\frac{(1-\epsilon)\gamma\sqrt{\pi}}{2D_\infty}\left[\erf{(x)}\right]^2\exp{(2x^2)},\hspace{0.5cm}x>0\text{,}\hspace{0.5cm}
\end{equation}
\hspace*{2.5cm} being $f_2$ the real function defined in (\ref{f2}).
\begin{equation}\label{F6}
F_6(x)=\left(\frac{x}{\erf{(x)}}+\frac{\gamma(1-\epsilon)\sqrt{\pi}}{2D_\infty}\exp{(x^2)}\right)\exp{(x^2)},\hspace{0.5cm}x>0\hspace{1cm}
\end{equation}

On the other hand, it is not difficult to verify that formulae and restrictions on data given in Table 2 correspond to
formulae and restrictions on data given in Table 1 for $h_0$ tending to $+\infty$. This fact, besides Lemmas \ref{th:0}
and \ref{th:00}, allow us to conclude that we can solve the phase-change process (\ref{1})-(\ref{6}) with one
unknown thermal coefficient overspecified by the Dirichlet condition (\ref{8}) through the
phase-change process (\ref{1})-(\ref{7}), which is overspecified by the more physically appropriate convective
boundary condition (\ref{7}), when the heat transfer coefficient $h_0$ tends to $+\infty$.

\section{Conclusions}
In this paper, we consider a semi-infinite material under a solidification process with a mushy zone caused by an initial heat flux boundary condition. We solve the associated free boundary problem overspecified with a convective boundary condition and obtain the temperature, the two free boundaries of the mushy zone and one thermal coefficient among the latent heat by unit mass, the thermal conductivity, the mass density, the specific heat and the two coefficients that characterize the mushy zone, when the bulk temperature and the coefficients that characterize the heat flux and the heat transfer at the boundary are assumed to be known. As a consequence, we give formulae for the temperature, the two free boundaries and the unknown thermal coefficient, beside necessary and sufficient conditions on data in order to obtain them.
In addition, we present the relationship between the phase-change process studied in this paper with another similar phase-change process which is overspecified by a Dirichelt boundary condition. From this relationship, we solve the problem with the Dirichlet condition by considering a large heat transfer coefficient in the problem with the convective condition. In this way, we solve the phase-change process overspecified with a temperature boundary condition through the more physically appropriate phase-change problem overspecified with a convective boundary condition. We summarize explicit formulae for the unknown thermal coefficient for both problems in Tables 1 and 2.

\section*{Competing interests}
The authors declare that there is no conflict of interests regarding the publication of this paper.

\section*{Acknowledgements}
This paper has been partially sponsored by the Project PIP No. 0534 from CONICET-UA (Rosario, Argentina) and AFOSR-SOARD Grant FA
9550-14-1-0122.

\bibliographystyle{plain}
\bibliography{references}
\end{document}